\documentclass[12pt,english,BCOR7.5mm]{amsart}
\usepackage{ae,aecompl}
\usepackage[T1]{fontenc}
\usepackage[latin9]{inputenc}
\usepackage[letterpaper]{geometry}
\geometry{verbose,tmargin=1in,bmargin=1in,lmargin=1in,rmargin=1in}
\usepackage{color}
\usepackage{babel}
\usepackage{amsthm}
\usepackage{amssymb}
\usepackage[unicode=true,
 bookmarks=true,bookmarksnumbered=true,bookmarksopen=true,bookmarksopenlevel=2,
 breaklinks=false,pdfborder={0 0 1},backref=false,colorlinks=true]
 {hyperref}
\hypersetup{pdftitle={Using XY-pc in LyX},
 pdfauthor={H. Peter Gumm},
 pdfsubject={LyX's XY-pic manual},
 pdfkeywords={LyX, documentation},
 linkcolor=black, citecolor=black, urlcolor=blue, filecolor=blue,pdfpagelayout=OneColumn, pdfnewwindow=true,pdfstartview=XYZ, plainpages=false, pdfpagelabels}

\makeatletter
\numberwithin{equation}{section}
\numberwithin{figure}{section}
\theoremstyle{plain}
\newtheorem{thm}{\protect\theoremname}
  \theoremstyle{definition}
  \newtheorem{problem}[thm]{\protect\problemname}
  \theoremstyle{definition}
  \newtheorem{defn}[thm]{\protect\definitionname}
  \theoremstyle{plain}
  \newtheorem{lem}[thm]{\protect\lemmaname}
  \theoremstyle{plain}
  \newtheorem{prop}[thm]{\protect\propositionname}

\usepackage[all]{xy}

\newcommand{\xyR}[1]{
  \xydef@\xymatrixrowsep@{#1}}
\newcommand{\xyC}[1]{
  \xydef@\xymatrixcolsep@{#1}}

\newdir{|>}{!/4.5pt/@{|}*:(1,-.2)@^{>}*:(1,+.2)@_{>}}

\let\myTOC\tableofcontents
\renewcommand\tableofcontents{%
  \pdfbookmark[1]{\contentsname}{}
  \myTOC }

\def\LyX{\texorpdfstring{%
  L\kern-.1667em\lower.25em\hbox{Y}\kern-.125emX\@}
  {LyX}}

\makeatother

  \providecommand{\definitionname}{Definition}
  \providecommand{\lemmaname}{Lemma}
  \providecommand{\problemname}{Problem}
  \providecommand{\propositionname}{Proposition}
\providecommand{\theoremname}{Theorem}

\begin{document}

\title{Geometric Progression-Free Sequences with Small Gaps II}

\author{Xiaoyu He}
\begin{abstract}
When $k$ is a constant at least $3$, a sequence $S$ of positive
integers is called $k$-GP-free if it contains no nontrivial $k$-term
geometric progressions. Beiglböck, Bergelson, Hindman and Strauss
first studied the existence of a $ $$k$-GP-free sequence with bounded
gaps. In a previous paper the author gave a partial answer to this
question by constructing a $6$-GP-free sequence $S$ with gaps of
size $O(\exp(6\log n/\log\log n))$. We generalize this problem to
allow the gap function $k$ to grow to infinity, and ask: for which
pairs of functions $(h,k)$ do there exist $k$-GP-free sequences
with gaps of size $O(h)$? We show that whenever $(k(n)-3)\log h(n)\log\log h(n)\ge4\log2\cdot\log n$
and $h,k$ satisfy mild growth conditions, such a sequence exists.
\end{abstract}
\maketitle

\section{Introduction}

Let $S$ be an increasing sequence of positive integers. We say that
$S$ is $k$-GP-free if it contains no $k$-term geometric progressions
with common ratio not equal to $1$, where $k\geq3$ for the problem
to be nontrivial. Let $h$ be a nondecreasing function $\mathbb{N}\rightarrow\mathbb{R}^{+}$.
We say that a sequence $S$ has gaps of size $O(h)$ if there exists
a constant $C>0$ such that for every pair $m,N\in\mathbb{N}$ with
$m\le N$, the sequence $S$ intersects the interval $[m,m+Ch(N))$. 

The maximal asymptotic density of a $k$-GP-free sequence is well-studied
\cite{BG,McNew,NO,Riddell}. Beiglböck et al. \cite{BBHS} originally
posed the related question:
\begin{problem}
\label{kconst}Does there exist $k\geq3$ and a $k$-GP-free sequence
$S$ such that $S$ has gaps of size $O(1)$?
\end{problem}
The standard example of a $3$-GP-free sequence is the sequence $Q$
of positive squarefree numbers $1,2,3,5,6,7,10,\ldots$, which has
asymptotic density $\frac{6}{\pi^{2}}$. Despite its large density,
the size of its largest gaps is not known. The best unconditional
result available is that of Filaseta and Trifonov \cite{FT} that
$Q$ has gaps of size $O(N^{1/5}\log N)$, and Trifonov also established
a generalization that the sequence of $k$-th-power-free numbers has
gaps of size $O(N^{1/(2k+1)}\log N)$ \cite{Trifonov}. Assuming the
$abc$ conjecture, Granville showed that the gaps of $Q$ are of size
$O(N^{\varepsilon})$ for all $\varepsilon>0$ \cite{Granville}. 

All of these bounds can be improved immensely if we assume the conjecture
of Cramér that the gaps between consecutive primes are $O(\log^{2}N)$
\cite{Cramer}. For a discussion of Cramér's model and implications,
see the article of Pintz \cite{Pintz}. The problem of bounding largest
gaps between consecutive primes, both from above and below, is notoriously
difficult, and the best known lower bound is
\[
p_{n+1}-p_{n}\ge\frac{C\log p_{n}\log\log p_{n}\log\log\log\log p_{n}}{\log\log\log p_{n}}
\]
for some $C>0$ and infinitely many $n$, due to Ford, Green, Konyagin,
Maynard, and Tao \cite{FGKMT}, an improvement by $\log\log\log p_{n}$
over the longstanding bound of Rankin \cite{Rankin2}. The best unconditional
upper bound is $p_{n+1}-p_{n}=O(N^{0.525})$, due to Baker, Harman,
and Pintz \cite{BHP}, with $O(N^{1/2}\log N)$ possible assuming
the Riemann hypothesis.

Instead of pursuing these notoriously difficult problems, in a previous
paper the author showed that by replacing $Q$ by a randomly constructed
analogue, we can improve on Granville's bound unconditionally.
\begin{thm}
\label{thm:prev}\cite{He} There exists a $6$-GP-free sequence $T$
and a constant $C>0$ such that the gaps of $T$ are of size $O(\exp(C\log N/\log\log N)).$
In fact $C$ can be taken to be any positive real greater than $\frac{5}{6}\log2$.
\end{thm}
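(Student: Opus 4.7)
The plan is to exploit the natural $p$-adic structure of geometric progressions: for any integer 6-term GP $a, ar, ar^2, \ldots, ar^5$ with ratio $r$, the sequence of $p$-adic valuations $(v_p(ar^i))_{i=0}^5$ is a 6-term arithmetic progression with common difference $v_p(r)$. Thus, if one chooses, for each prime $p$ below a cutoff $M=M(N)$, a set $A_p\subseteq\mathbb{Z}_{\ge 0}$ that contains no nontrivial 6-term AP, and sets
\[
T = \{\,n\in\mathbb{N} : v_p(n)\in A_p \text{ for every prime } p\le M\,\},
\]
then $T$ avoids every 6-term GP whose common ratio is $M$-smooth. GPs whose ratio has a prime factor greater than $M$ are handled separately by imposing a fifth-power-free condition at large primes: for such a ratio $p/q$ in lowest terms with $\max(p,q)>M$, either $q^5 \mid a$ or $p^5 \mid ar^5$, so removing multiples of $p^5$ for all $p>M$ kills these GPs automatically.

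For the gap bound, the remaining task is to choose the $A_p$ as dense as possible so that $T$ has high density in short intervals. A Behrend-type construction produces sets in $\{0,1,\ldots,K\}$ of density $\exp(-O(\sqrt{\log K}))$ avoiding 3-APs, and a fortiori 6-APs. Plugging randomized Behrend sets into the definition of $T$ and analyzing the distribution of $T$ in a short interval $I$ of length $h = h(N) = \exp(C\log N/\log\log N)$, the expected size of $T\cap I$ is comparable to $h\cdot\prod_{p\le M}(|A_p|/K_p)$. A second-moment computation bounds the probability that $I\cap T=\varnothing$, and a union bound over all intervals $I\subseteq[1,N]$ shows that with positive probability over the randomness, every interval of length $h(N)$ meets $T$. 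A standard patching argument over dyadic ranges $[N, 2N]$ then converts this finite statement into an infinite 6-GP-free sequence with gaps $O(h)$.

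Optimizing the cutoff $M$, the Behrend density exponent, and the fifth-power cutoff should produce the stated constant $C>\tfrac{5}{6}\log 2$; heuristically the $\tfrac{5}{6}$ records the tradeoff between the fifth-power divisibility that forces the GP into a sparse structure and the six independent positions in the GP at which one may attempt to sieve, while the $\log 2$ is the universal efficiency factor appearing in sieve-style union bounds at the $\log N/\log\log N$ scale. The main obstacle I expect is carrying out the second-moment analysis of $|T\cap I|$ finely enough to extract the sharp constant: the events ``$v_p(n)\in A_p$'' are pairwise-almost-independent across $p$ for a random $n$, but inside a short interval $I$ the induced correlations must be controlled carefully, probably by grouping primes into dyadic ranges, handling each range by a separate moment estimate, and then multiplying the contributions. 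Once this is done, the Behrend/fifth-power decomposition and the union bound should combine cleanly to give the theorem.
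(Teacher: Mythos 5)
There is a genuine gap, and it is structural. Your reduction of the large-ratio case to a fifth-power-free condition at primes $p>M$ turns the problem into bounding gaps between (essentially) fifth-power-free numbers, which is a notoriously hard problem for which only polynomial bounds are known: Trifonov's result \cite{Trifonov} gives gaps $O(N^{1/11}\log N)$ for $5$-free numbers, and nothing close to $\exp(C\log N/\log\log N)$ is accessible. Concretely, in an interval $[x,x+h)$ with $h=\exp(C\log x/\log\log x)$, the naive count of integers divisible by $p^{5}$ for some prime $p>M$ is $\sum_{p>M}(h/p^{5}+1)$, and the ``$+1$'' terms contribute up to $\pi((2x)^{1/5})\approx x^{1/5}/\log x$, which dwarfs $h$; a priori the entire interval could be covered, and ruling that out is exactly the Filaseta--Trifonov differencing problem. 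The smooth part has the same defect in disguise: since $\{0,1,2,3,4,5\}$ is a $6$-term AP, every $6$-AP-free set $A_{p}$ must omit some $j_{p}\le 5$, so the condition $v_{p}(n)\in A_{p}$ excludes all $n$ with $p^{j_{p}}\,\Vert\, n$; taking the union of these exclusions over all relevant primes again reduces to a power-free-gaps problem, not something controllable at subpolynomial interval lengths. Your heuristic for the constant $\frac{5}{6}\log 2$ does not reflect its actual origin either.

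The method of \cite{He} (generalized in Definition \ref{process} and Lemmas \ref{lem:individual}--\ref{lem:sum-bound} of this paper, with $k\equiv 6$) avoids all of this by being far more economical: one first passes to the family $G_{6}^{*}$ of \emph{minimal} $6$-term progressions, each of the form $(ab^{5},ab^{4}c,\ldots,ac^{5})$, and deletes exactly \emph{one} element from each, chosen uniformly at random from the two middle terms $ab^{3}c^{2}$ and $ab^{2}c^{3}$. The probability that a given $n$ survives is then at least $2^{-d(n;3,2)}$, where $d(n;3,2)$ counts factorizations $n=ab^{3}c^{2}$; this divisor-type quantity is small for all $n$ outside a sparse, explicitly controllable exceptional set (those divisible by $p^{2}$ for small $p$), and the two middle terms of any one progression are far apart, so survival events inside a short interval are independent and a product/union bound closes the argument. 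The exponent $\frac{5}{6}=\frac{1}{2}+\frac{1}{3}$ comes from bounding the number of choices of $b$ with $b^{2}\mid n$ and $c$ with $c^{3}\mid n$. If you want to salvage your $p$-adic framework, you would need to replace the deterministic exclusion rules (which delete every $n$ satisfying a divisibility condition) by a rule that deletes only one term per progression; at that point you have essentially rediscovered the paper's construction.
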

In this paper we generalize the Problem \ref{kconst} as follows.
Henceforth $k$ is no longer a constant but a nondecreasing function
$k:\mathbb{N}\rightarrow\mathbb{R}_{\geq3}$. We say that $S$ is
$k$-GP-free if for every $N\in\mathbb{N}$, the finite subsequence
$S\cap\{1,2,\ldots,N\}$ does not contain any nontrivial geometric
progressions of length at least $k(N)$.
\begin{problem}
For which pairs of functions $(h,k)$ do there exist $k$-GP-free
sequences $S$ such that $S$ has gaps of size $O(h)$?
\end{problem}
We call $h$ the gap function and $k$ the length function, and a
pair $(h,k)$ feasible if such an $S$ exists. Thus far we have only
dealt with constant length function; in particular Theorem 2 shows
that the pair $(\exp(C\log N/\log\log N),6)$ is feasible. At the
other end of the spectrum, it is trivial that $(1,\log N/\log2)$
is a feasible pair, simply because the longest possible geometric
progression in $1,\ldots,N$ has length at most $\log N/\log2$. In
the last section of this paper we show in fact that $(1,\varepsilon\log N)$
is feasible for any $\varepsilon>0$.

To interpolate between these two situations, we prove the following
theorem, extending the method used in \cite{He} to prove Theorem
\ref{thm:prev}.

For two functions $f,g:\mathbb{N}\rightarrow\mathbb{R}^{+}$ we write
$f=O(g)$ if there exists a constant $C>0$ such that $f(n)\le Cg(n)$
for all $n\in\mathbb{N}$ and $f=o(g)$ if for every $C>0$ the inequaliy
$f(n)\le Cg(n)$ holds for all $n$ sufficiently large. We also write
$f=\Omega(g)$ if $g=O(f)$.
\begin{thm}
\label{thm:main}Let $(h,k)$ be nondecreasing functions $\mathbb{N}\rightarrow\mathbb{R}^{+}$
such that $h(n)=\Omega((\log x)^{1/(1-\log2)})$ and for all sufficiently
large $n$, $k(n)>5$. If they satisfy 
\[
(k(n)-3)\log h(n)\log\log h(n)\geq4\log2\cdot\log n,
\]
for all sufficiently large $n,$ then there exists a $k$-GP-free
sequence $T$ with gaps of size $O(h)$.
\end{thm}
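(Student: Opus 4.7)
\emph{Approach.} The plan is to extend the probabilistic construction from \cite{He} that was used to prove Theorem~\ref{thm:prev}. Partition $\mathbb{N}$ into disjoint consecutive blocks $B_1,B_2,\ldots$ whose sizes track $h$---concretely, $|B_i|=\lfloor c\cdot h(\max B_i)\rfloor$ for some fixed small $c>0$. Independently from each block pick a uniform random element $X_i$, and let $T=\{X_i\}$. By construction $T$ has gaps of size $O(h)$ deterministically, so it remains to show that with positive probability $T$ contains no nontrivial $k(N)$-term geometric progression inside $[1,N]$ for any sufficiently large $N$.

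\emph{First moment.} Fix $N$ and enumerate the $k(N)$-term geometric progressions in $[1,N]$. Writing the common ratio in lowest terms as $p/q$, each such progression has the form $bq^{k-1},\,bq^{k-2}p,\,\ldots,\,bp^{k-1}$ subject to $bp^{k-1}\le N$. For all $k$ terms to fall in distinct blocks, the minimum gap $bq^{k-2}(p-q)$ must be $\gtrsim h(N)$. Conditional on this, the probability that $T$ hits every term is at most $h(N)^{-k(N)}$, so the expected number of surviving progressions is at most
\[
\frac{1}{h(N)^{k(N)}}\sum_{\substack{(p,q,b)\\ \text{admissible}}} 1.
\]
Split the sum by whether $r=p/q\ge 2$ or $r<2$. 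The large-ratio part contributes roughly $N/(2^{k-1}h^{k})$, while the small-ratio part---parameterized by $d=p-q\in\{1,\ldots,q-1\}$---requires more care: the constraint $bq^{k-2}d\ge h(N)$ forces $b$ to be large when $r$ is close to $1$, and summing over $(q,d)$ produces additional factors of $\log h$ and $\log\log h$ in the exponent.

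\emph{Combining and conclusion.} The combined estimate takes the form $\mathbb{E}[\,\cdot\,]\le N\cdot 2^{-(k-3)\log h\,\log\log h/(4\log 2)+o(\log N)}$, and the hypothesis $(k(N)-3)\log h(N)\log\log h(N)\ge 4\log 2\cdot\log N$ is exactly what makes this summable along a dyadic sequence $N=2^j$. Borel--Cantelli then gives almost surely only finitely many surviving progressions across all $N$, which can be destroyed by deleting one term from each while preserving the $O(h)$ gap bound. The main obstacle is the second step: bounding the contribution of progressions with ratio $r$ just slightly above $1$. In this regime the interplay between $h(N)$, $q$, and $p-q$ is delicate, and the $\log\log h$ factor in the hypothesis reflects the dominant contribution from ratios of the form $r\approx 1+1/\log h$, where a careful dyadic decomposition of the $(q,d)$-sum is needed to avoid losing further logarithmic factors.
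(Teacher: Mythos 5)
Your construction runs in the opposite direction from the paper's, and the first-moment calculation at its heart does not close. The paper deletes one of the two \emph{middle} terms of each minimal bad progression, so that $T$ is $k$-GP-free by construction, and then shows that with positive probability the deleted set misses a point of every interval of length $Ch$; the difficulty is absorbed into the divisor bound $\mathbb{P}[T\ni n]\ge 2^{-d(n;k/2,k/2-1)}$ plus the observation that the two middle terms of any progression are too far apart to both land in a short interval. You instead force the gap bound deterministically (one uniform point per block of length about $ch$) and try to kill every progression by a union bound. The obstruction is that the expected number of surviving progressions is enormous. Consider the progressions $b,2b,4b,\ldots,2^{k-1}b$ with $2^{k-1}b\le N$ and, say, $b\ge\sqrt{N}$: there are still about $N/2^{k-1}$ of them; consecutive terms differ by at least $b$, which exceeds the block length (recall $h=o(\sqrt n)$ after the paper's reduction), so all $k$ terms lie in distinct blocks and are selected independently, each with probability at least $(ch(N))^{-1}$. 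Hence each such progression survives with probability at least $(ch(N))^{-k}$, and the expectation is at least of order $N\cdot2^{-(k-1)}(ch(N))^{-k(N)}$. In the regime $k=6$, $h=\exp(\log N/\log\log N)$ --- which satisfies the theorem's hypothesis --- this is $N^{1-O(1/\log\log N)}\to\infty$. The dyadic sum diverges, Borel--Cantelli gives nothing, and ``delete one term from each survivor'' would remove $N^{1-o(1)}$ of the roughly $N/h$ chosen points, which can certainly wipe out whole runs of blocks and destroy the $O(h)$ gap property.

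Two further warning signs that the estimate cannot be repaired in this form. First, your survival probability ``at most $h(N)^{-k(N)}$'' has the inequality backwards for progressions with terms far below $N$: those terms lie in blocks of length about $ch(\mathrm{term})\ll ch(N)$ and are \emph{more} likely to be chosen, which only increases the expectation. Second, even your final displayed bound is not summable: the hypothesis makes the exponent $(k-3)\log h\log\log h/(4\log2)$ at least $\log N$, so $N\cdot 2^{-(k-3)\log h\log\log h/(4\log 2)}$ is only bounded by $N\cdot 2^{-\log N}=N^{1-\log 2}\to\infty$. The underlying moral is that a random set of density $1/h$ with $h^{k}\ll N$ cannot avoid all $k$-term geometric progressions by a first-moment argument, since there are on the order of $N$ such progressions in $[1,N]$; the paper's device of randomizing only over the two middle terms of each \emph{minimal} progression, and measuring failure through the factorization count $d(n;k/2,k/2-1)$, is precisely what makes the union bound affordable.
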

As a corollary, if $k$ is constant we recover Theorem \ref{thm:prev}
with a weaker constant.

\section{Preliminaries}

In this section we generalize the GP-free process of \cite{He} to
probabilistically construct a $k$-GP-free sequence. First we simplify
Theorem \ref{thm:main} by reducing the set of possible length functions
$k$. It suffices to show the following.
\begin{thm}
\label{thm:reduced}If $k$ is a nondecreasing function $\mathbb{N}\rightarrow\{6,8,\ldots\}$
taking on even positive integer values at least $6$, and $h:\mathbb{N}\rightarrow\mathbb{R}^{+}$
is a function satisfying $h(n)=\Omega((\log x)^{1/(1-\log2)})$, $h(n)=o(\sqrt{n})$
and
\[
(k(n)-2)\log h(n)\log\log h(n)\ge4\log2\cdot\log n,
\]
for all $n$ sufficiently large, then there exists a $k$-GP-free
sequence $T$ with gaps of size $O(h)$.\end{thm}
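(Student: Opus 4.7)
The plan is to generalize the probabilistic construction of \cite{He} so that inclusion probabilities vary with $N$ through both $h(N)$ and $k(N)$. Fix a large threshold $N$; I will build a random set $T_N \subseteq [1,N]$ that is $k(N)$-GP-free and has gaps of size $O(h(N))$ with overwhelming probability, then stitch the $T_N$ together along a dyadic sequence of scales via Borel--Cantelli to produce the desired infinite sequence $T$.

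The construction: let $X_N$ be a random subset of $[1,N]$ in which each element is included independently with probability $p_N$, and let $Y_N$ consist of those $n \in X_N$ that lie in some $k(N)$-term geometric progression contained entirely in $X_N$. Take $T_N = X_N \setminus Y_N$. Any $k(N)$-term GP inside $T_N$ would be a GP in $X_N$ whose elements are all in $Y_N$, a contradiction; so $T_N$ is automatically $k(N)$-GP-free.

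To establish the gap bound, partition $[1,N]$ into $\Theta(N/h(N))$ intervals of length $h(N)$ and show that with probability at least $1 - O(N^{-2})$ every such interval meets $T_N$. A Chernoff estimate gives $|X_N \cap I| \gtrsim p_N h(N)$ provided $p_N h(N) \gg \log N$. For the deletion set, $\mathbb{E}|I \cap Y_N|$ is at most $\sum_{n \in I} p_N \cdot \nu_{k(N)}(n) \cdot p_N^{k(N)-1}$, where $\nu_k(n)$ is the number of $k$-term GPs in $[1,N]$ through $n$; using that the total number of length-$k$ GPs in $[1,N]$ is $O(N)$ for $k \geq 4$, and averaging, the per-interval expectation is of order $h(N)\,p_N^{k(N)}$ up to logarithmic corrections. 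A concentration estimate (Chernoff together with a second-moment bound that handles the mild dependence between the indicators $\mathbf{1}[n \in Y_N]$) promotes this expectation to a high-probability bound. Choosing $p_N$ of order $h(N)^{-1/(k(N)-2)}$ with a polylogarithmic adjustment balances the two estimates, and the resulting algebra collapses exactly to $(k(N)-2)\log h(N)\log\log h(N) \geq 4\log 2 \cdot \log N$, where the $\log\log h(N)$ factor absorbs the Borel--Cantelli summation over the dyadic scales.

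The main obstacle is upgrading the average GP count to a per-element or per-interval bound: certain $n$, such as powers of small primes or highly composite numbers, participate in anomalously many GPs and can ruin a naive union bound. This is where the restriction to even $k$ is essential: for even $k(N)$, a $k(N)$-term GP is determined by its two central terms $ar^{k/2-1}$ and $ar^{k/2}$, whose rigid multiplicative relation yields both the global $O(N)$ count and an adequate per-element count. The hypothesis $h(n) = o(\sqrt{n})$ enters by ensuring that no interval of length $h(N)$ contains a full central pair of a large GP, so GPs and intervals decouple enough for the union bound to close.
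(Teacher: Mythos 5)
Your proposal is a different construction from the paper's, and the differences are where it breaks. The paper does not sample a Bernoulli set and delete every member of every complete progression; it enumerates the minimal forbidden progressions $G_{k}^{*}$ and, for each one, deletes exactly one of its two \emph{middle} terms $a_{i}b_{i}^{k/2-1}c_{i}^{k/2}$, $a_{i}b_{i}^{k/2}c_{i}^{k/2-1}$, chosen uniformly and independently. That choice is load-bearing in three ways your scheme does not reproduce. First, the survival probability of $n$ becomes $\geq 2^{-d(n;k/2,k/2-1)}$, where $d(n;k/2,k/2-1)$ counts factorizations $n=ab^{k/2}c^{k/2-1}$; this is small for every $n\in[x,x+Ch)$ outside a subset of relative density at most $1-B$ (those divisible by $p^{k/2-1}$ for a prime $p\le h$). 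In your scheme the relevant count is $\nu_{k}(n)$, the number of $k$-term progressions through $n$ in \emph{any} position, and this is genuinely large: the integer-ratio progressions $(n,nc,\ldots,nc^{k-1})$ alone give $\nu_{k}(n)\ge (N/n)^{1/(k-1)}-1$, so for $n$ well below $N$ the expected deletion $p_{N}^{k}\nu_{k}(n)$ swamps the inclusion probability $p_{N}=h^{-1/(k-2)}$ whenever $h$ is subpolynomial in $N$ --- which the hypothesis permits (for constant $k$ it forces only $h\ge\exp(c\log N/\log\log N)$). Passing from the global $O(N)$ progression count to a per-interval bound by ``averaging'' does not close this, because the incidences concentrate on specific integers and intervals. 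Second, the independence needed for your concentration step is obtained in the paper from a concrete geometric fact --- the two middle terms of any progression in $G_{k}^{*}$ differ by at least $x^{1-1/(k(x)-1)}\gg Ch(x)$, so the survival events of distinct $n$ in one interval are exactly mutually independent --- whereas deleting whole progressions couples elements of the same interval through progressions with consecutive terms close together. Third, your stated balancing would yield a condition of the shape $p_{N}h\gg\log N$, i.e.\ $h^{(k-3)/(k-2)}\gg\log N$, which is not the theorem's inequality; in the paper the factor $\log\log h$ arises specifically from bounding $d(n;k/2,k/2-1)\le\exp\big(4\log2\cdot\log x/((k-2)\log h)\big)\le\log h$ on the good set, not from ``absorbing the Borel--Cantelli summation over dyadic scales.''

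There is also a structural gap in the stitching. A $k$-GP-free sequence must, for every $N$ simultaneously, avoid progressions of length $\ge k(N)$ inside $[1,N]$; such a progression can span many dyadic blocks, so a deletion step run separately at each scale either ignores elements committed at earlier scales or overrules them. The paper avoids this entirely by running one global process over all of $G_{k}^{*}$ at once and only then analyzing each interval $[x,x+Ch(x))$. To repair your argument you would at minimum need a single global random object, a per-element (not averaged) bound on the deletion probability, and an accounting that actually produces the $\log\log h$ factor; the paper's middle-term device accomplishes all three at once.
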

\begin{proof}
(that Theorem \ref{thm:reduced} implies Theorem \ref{thm:main}).
Suppose Theorem \ref{thm:reduced} is true, and let $k$ be as in
Theorem \ref{thm:main}. We can certainly round up $k$ to the nearest
integer to begin with. It is also possible to ignore the finite set
of $n$ for which $k\leq5$, since we only care about $n$ sufficiently
large. If we round $k$ down to the nearest even integer, if it originally
satisfied the inequality of Theorem \ref{thm:main}, then it has decreased
by at most $1$ uniformly, so the inequality above holds. Finally,
if we prove the theorem for all $h(n)=o(\sqrt{n})$, then it follows
for all larger $h$ as well, so we may as well assume $h(n)=o(\sqrt{n})$.
\end{proof}
Let $G_{k}$ be the family of all geometric progressions of positive
integers such that if $t$ is the largest term, then the length is
at least $k(t)$. Enumerate them as $G_{k,i}$ in order lexicographically
as sequences of positive integers. We assume that each $G_{k,i}$
has common ratio $r_{k,i}>1$.

Furthermore, there may be longer $G_{k,i}$ containing shorter ones;
let $G_{k}^{*}$ denote the result of removing from $G_{k}$ all $G_{k,i}$
which contain some $G_{k,j}$ with $j\neq i$. Thus to find a $k$-GP-free
sequence it suffices to construct a sequence $T_{k}$ missing at least
one element from each progression in $G_{k}^{*}$. Let $G_{k,i}^{*}$
denote the $i$-th progression in $G_{k}^{*}$.
\begin{defn}
\label{process}For a nondecreasing function $k:\mathbb{N}\rightarrow\{6,8\ldots\}$,
define the $k$-GP-free process as follows. Define an integer-sequence
valued random variable $U_{k}=(u_{1},u_{2},\ldots)$ where $u_{i}\in G_{k,i}^{*}$
such that if
\[
G_{k,i}^{*}=(a_{i}b_{i}^{k-1},a_{i}b_{i}^{k-2}c_{i},\ldots,a_{i}c_{i}^{k-1}),
\]
then $u_{i}$ is chosen from $a_{i}b_{i}^{k/2-1}c_{i}^{k/2}$ and
$a_{i}b_{i}^{k/2}c_{i}^{k/2-1}$ with equal probability $\frac{1}{2}$.
Each $u_{i}$ is picked independently of the others. Then $T_{k}$
is the random variable whose value is the sequence of all positive
integers never appearing in $U_{k}$, sorted in increasing order.

It is clear that $T_{k}$ is $k$-GP-free by definition, as it misses
at least one term out of each $G_{k,i}^{*}$. We now bound the probability
that a given $n\in\mathbb{N}$ lies in $T_{k}$ generated as above.
For $i,j\ge1$, let $d(n;i,j)$ count the number of ways to factor
$n=ab^{i}c^{j}$ for some $a,b,c\in\mathbb{N}$.\end{defn}
\begin{lem}
\label{lem:individual}For a positive integer $n$, the sequence $T_{k}$
constructed in Definition \ref{process} contains $n$ with probability
\[
\mathbb{P}[T_{k}\ni n]\geq2^{-d(n;k(m)/2,k(m)/2-1)},
\]
where $m$ is any positive integer such that any $G_{k,i}^{*}$ containing
$n$ in its middle two terms has largest term at least $m$.\end{lem}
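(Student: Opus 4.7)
The plan is to exploit the independence of the $u_i$ in Definition~\ref{process}. The event $\{T_k \ni n\}$ is equivalent to $\{u_i \neq n \text{ for every } i\}$. For each fixed $i$, $\mathbb{P}[u_i = n]$ equals $\tfrac{1}{2}$ when $n$ is one of the two middle terms of $G_{k,i}^*$ and $0$ otherwise. Letting $I_n$ denote the set of indices $i$ such that $n$ is a middle term of $G_{k,i}^*$, independence of the $u_i$ yields
\[
\mathbb{P}[T_k \ni n] = \prod_{i} \mathbb{P}[u_i \neq n] = 2^{-|I_n|},
\]
so the lemma reduces to the combinatorial bound $|I_n| \leq d(n; k(m)/2, k(m)/2-1)$.

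To establish this bound, I would construct an injection from $I_n$ into the set of triples $(X, Y, Z) \in \mathbb{N}^3$ satisfying $n = X Y^{k(m)/2} Z^{k(m)/2-1}$. A preliminary structural observation is that every GP in $G_k^*$ has length exactly $k(t)$, where $t$ is its largest term: if the length $L$ exceeded $k(t)$, then dropping the largest term would produce a proper sub-progression of length $L - 1$ whose largest term $t' \leq t$ satisfies $k(t') \leq k(t) < L$, so that the truncation would still lie in $G_k$, contradicting minimality. Combined with the hypothesis that every $G_{k,i}^*$ in $I_n$ has largest term at least $m$, this forces the length $L$ of such a GP to satisfy $L \geq k(m)$, with $L - k(m)$ a nonnegative even integer. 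Writing each such GP in canonical form with parameters $(a, b, c)$ (so common ratio $c/b > 1$ and $\gcd(b,c) = 1$), the map sends $G_{k,i}^*$ to $(a(bc)^{(L-k(m))/2}, b, c)$ when $n$ is the lower middle term, and to $(a(bc)^{(L-k(m))/2}, c, b)$ when $n$ is the upper middle term. A direct calculation confirms $n = X Y^{k(m)/2} Z^{k(m)/2-1}$ and $X \in \mathbb{N}$ in both cases.

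The main obstacle is verifying injectivity. Because $b < c$ in the canonical form, lower middle images satisfy $Y < Z$ while upper middle images satisfy $Y > Z$, so the two subcases automatically have disjoint ranges. Within either subcase, two GPs sharing the same pair $(b, c)$ but with distinct lengths $L_1 < L_2$ would produce parameters $(a_1, b, c)$ and $(a_2, b, c)$ with $a_1 = a_2 (bc)^{(L_2 - L_1)/2}$; a brief index-matching computation then identifies the shorter GP with the middle $L_1$ terms of the longer one, so $G_2$ properly contains $G_1 \in G_k$, contradicting $G_2 \in G_k^*$. Hence for each canonical $(b,c)$ at most one $L$ occurs in each subcase, the map is an injection, and $|I_n|$ is bounded by the number of factorizations counted by $d(n; k(m)/2, k(m)/2-1)$, which finishes the proof.
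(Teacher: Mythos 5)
Your proposal is correct and follows essentially the same route as the paper: reduce the probability bound to showing $n$ is a middle term of at most $d(n;k(m)/2,k(m)/2-1)$ progressions of $G_k^*$, then inject such progressions into factorizations by absorbing the excess factor $(b_ic_i)^{(k'-k(m))/2}$ into the unit part and swapping the roles of $b_i,c_i$ according to which middle term equals $n$. Your added observations (that minimality forces the length to be exactly $k(t)$, hence even and at least $k(m)$, and the explicit containment argument for injectivity) are details the paper leaves implicit rather than a different method.
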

\begin{proof}
The inequality is equivalent to the statement that $n$ is one of
the middle two terms in at most $d(n;\frac{k(m)}{2},\frac{k(m)}{2}-1)$
progressions of $G_{k}^{*}$. We form an injective correspondence
from progression $G_{k,i}^{*}$ containing $n$ in the middle two
terms to factorizations of $n$ as $n=ab^{k(m)/2}c^{k(m)/2-1}$. If
a progression 
\[
G_{k,i}^{*}=(a_{i}b_{i}^{k'-1},a_{i}b_{i}^{k'-2}c_{i},\ldots,a_{i}c_{i}^{k'-1})
\]
with $b_{i}<c_{i}$ and $k'\geq k(a_{i}c_{i}^{k'-1})$ contains $n$
as one of the middle two terms, then certainly $k(m)\leq k'$. Supposing
$n=a_{i}b_{i}^{k'/2-1}c_{i}^{k'/2}$, we map $G_{k,i}^{*}$ to the
factorization $n=ab^{k(m)/2}c^{k(m)/2-1}$ with $a=a_{i}b_{i}^{(k'-k(m))/2}c_{i}^{(k'-k(m))/2}$,
$b=c_{i}$ and $c=b_{i}$. Similarly if $n=a_{i}b_{i}^{k'/2}c_{i}^{k'/2-1}$
we take $a=a_{i}b_{i}^{(k'-k(m))/2}c_{i}^{(k'-k(m))/2}$, $b=b_{i}$
and $c=c_{i}$. It is easy to see from the assumptions that $b_{i}<c_{i}$
and that no progression in $G_{k}^{*}$ strictly contains another
that the correspondence above is injective, as desired.
\end{proof}
From here we can control the total probability that $T_{k}$ misses
an entire interval of the form $[x,x+Ch(x))$.
\begin{lem}
\label{lem:singlegap}For a gap function $h(x)=o\Big(x^{1-1/(k(x)-1)}\Big)$
and a constant $C>0$, the sequence $T_{k}$ constructed in Definition
\ref{process} satisfies $T_{k}\cap[x,x+Ch(x))=\emptyset$ with probability

\[
\mathbb{P}[T_{k}\cap[x,x+Ch(x))=\emptyset]\leq\exp\Big(-\sum_{n\in[x,x+Ch(x))}\exp\Big(-\log2\cdot d\Big(n;\frac{k(x)}{2},\frac{k(x)}{2}-1\Big)\Big)\Big)
\]
for all $x$ sufficiently large.\end{lem}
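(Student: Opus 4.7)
My plan is to prove that, for $x$ sufficiently large, the events $\{T_{k}\ni n\}$ for $n\in[x,x+Ch(x))$ are mutually independent; once this is established, the lemma follows from the elementary inequality $1-p\leq e^{-p}$ combined with Lemma \ref{lem:individual}.

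Writing $I_{n}:=\{i:n\text{ is a middle term of }G_{k,i}^{*}\}$, the event $\{T_{k}\ni n\}$ depends only on the coin flips $u_{i}$ with $i\in I_{n}$, so mutual independence across the interval follows as long as the sets $I_{n}$ are pairwise disjoint for $n$ in the interval. Since each $G_{k,i}^{*}$ has only two middle terms, a violation of disjointness would mean that two distinct $n_{1},n_{2}\in[x,x+Ch(x))$ arise as the two middle terms of a common $G_{k,i}^{*}$.

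The main obstacle is ruling out this scenario, and this is where the hypothesis $h(x)=o\bigl(x^{1-1/(k(x)-1)}\bigr)$ enters. Suppose $n_{1}<n_{2}$ in the interval are both middle terms of $G_{k,i}^{*}$ with common ratio $r_{i}=p/q$ in lowest terms. On the one hand, $n_{2}/n_{1}=p/q$ together with $n_{2}-n_{1}<Ch(x)$ and $n_{1}\geq x$ forces $q>x/(Ch(x))$. On the other hand, writing the GP as $A,Ar,\ldots,Ar^{k'-1}$ with $A=a_{i}b_{i}^{k'-1}$, integrality of the top term $Ar^{k'-1}$ forces $q^{k'-1}\mid A$; since $p>q$, the smaller middle term then satisfies
\[
n_{1}=Ar^{k'/2-1}\geq q^{k'-1}\cdot\frac{p^{k'/2-1}}{q^{k'/2-1}}=q^{k'/2}p^{k'/2-1}\geq q^{k'-1}.
\]
Combined with $n_{1}<2x$ and $k'\geq k(x)$ (from the monotonicity of $k$ and the fact that the largest term of the GP exceeds $n_{2}\geq x$), this gives $q<(2x)^{1/(k(x)-1)}$. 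The two bounds on $q$ together force $h(x)\geq x^{1-1/(k(x)-1)}/(2C)$, contradicting the hypothesis for $x$ large.

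Given this independence, I would conclude
\[
\mathbb{P}\bigl[T_{k}\cap[x,x+Ch(x))=\emptyset\bigr]=\prod_{n\in[x,x+Ch(x))}\bigl(1-\mathbb{P}[T_{k}\ni n]\bigr)\leq\exp\Bigl(-\sum_{n}\mathbb{P}[T_{k}\ni n]\Bigr),
\]
and then apply Lemma \ref{lem:individual} with $m=x$, which is valid since any $G_{k,i}^{*}$ whose middle two terms include some $n\in[x,x+Ch(x))$ has largest term at least $n\geq x$. This replaces each $\mathbb{P}[T_{k}\ni n]$ by its lower bound $\exp\bigl(-\log2\cdot d(n;k(x)/2,k(x)/2-1)\bigr)$, yielding the claimed inequality.
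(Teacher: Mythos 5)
Your proposal is correct and follows essentially the same route as the paper: establish mutual independence of the events $\{T_{k}\ni n\}$ by showing no $G_{k,i}^{*}$ can have both middle terms inside $[x,x+Ch(x))$, then apply $1-t\leq e^{-t}$ together with Lemma \ref{lem:individual} at $m=x$. Your presentation of the key estimate as two incompatible bounds on the denominator $q$ of the ratio is just a repackaging of the paper's direct lower bound $|a_{i}b_{i}^{k/2-1}c_{i}^{k/2}-a_{i}b_{i}^{k/2}c_{i}^{k/2-1}|\geq x/b_{i}\geq x^{1-1/(k(x)-1)}$, with $b_{i}$ playing the role of $q$.
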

\begin{proof}
We first prove that the events $\mathbb{P}[T_{k}\ni n]$ for $n\in[x,x+Ch(x))$
are mutually independent whenever $x$ is sufficiently large. It suffices
to show that no progression in $G_{k}^{*}$ has both middle terms
in the interval. Considering the difference between the two middle
terms in a $G_{k,i}^{*}$, and assuming both lie inside $[x,x+Ch(x))$,
we have 
\begin{eqnarray*}
|a_{i}b_{i}^{k/2-1}c_{i}^{k/2}-a_{i}b_{i}^{k/2}c_{i}^{k/2-1}| & \geq & a_{i}b_{i}^{k/2-1}c_{i}^{k/2-1}\\
 & \geq & x/b_{i}\\
 & \ge & x^{1-1/(k(m)-1)}\\
 & \ge & x^{1-1/(k(x)-1)}
\end{eqnarray*}
where $k\ge k(m)$ depends on the largest term $m=a_{i}c_{i}^{k-1}>x$.
It follows that assuming $h(x)=o\Big(x^{1-1/(k(x)-1)}\Big)$, for
any $C>0$ the middle two terms in any $G_{k,i}^{*}$ with largest
term at most $x$ are further apart than $Ch(x)$ for any $x$ sufficiently
large.

Thus the events corresponding to each $n$ in the interval are mutually
independent, and we can bound the probability involved by a product
\[
\mathbb{P}[T_{k}\cap[x,x+Ch(x))=\emptyset]\leq\prod_{n\in[x,x+Ch(x))}\Big(1-2^{-d(n;k(m)/2,k(m)/2-1)}\Big),
\]
by Lemma \ref{lem:individual}. Since the inequality $1-t\leq e^{-t}$
holds for all real $t$ we arrive at the bound
\[
\mathbb{P}[T_{k}\cap[x,x+Ch(x))=\emptyset]\leq\exp\Big(-\sum_{n\in[x,x+Ch(x))}\exp\Big(-\log2\cdot d(n;\frac{k(m)}{2},\frac{k(m)}{2}-1)\Big)\Big).
\]
Here each $m=m(n)$ can certainly be chosen as any number at most
$n$. Thus we replace them all by $x$, arriving at the desired bound.
\end{proof}
Note that since we assumed $h(x)=o(\sqrt{x})$ the growth condition
in Lemma \ref{lem:singlegap} is automatically satisfied.

\section{Proof of the Main Theorem}

All that remains is to give lower bounds for the sum
\[
S(x,h,k,C)=\sum_{n\in[x,x+Ch)}\exp\Big(-\log2\cdot d\Big(n;\frac{k}{2},\frac{k}{2}-1\Big)\Big),
\]
where $k=k(x)$ and $h=h(x)$ are functions satisfying the conditions
of Theorem \ref{thm:reduced}. To this end we break down $[x,x+Ch)$
into two sets, one of which has few $(k/2-1)$-power divisors, and
restrict the sum to that set.
\begin{lem}
\label{lem:sum-bound}There is a positive constant $B$ independent
of $x$ such that for all sufficiently large $x$,
\[
S(x,h,k,C)\geq BCh(x)\exp\Big(-\log2\exp\Big(\frac{4\log2\cdot\log x}{(k(x)-2)\log h(x)}\Big)\Big).
\]
\end{lem}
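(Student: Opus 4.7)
The plan is to restrict $S(x,h,k,C)$ to a subset $G\subset[x,x+Ch)$ on which $d(n;k/2,k/2-1)$ is uniformly bounded, then lower-bound $|G|$ by a positive proportion of the interval length. Fix a threshold $D=D(x)$ to be chosen at the end, and call $n\in[x,x+Ch)$ \emph{good} if $n$ has no $(k/2-1)$-th power divisor $e^{k/2-1}$ with $e>D$; let $G$ be the set of good $n$. For $n\in G$ and any factorization $n=ab^{k/2}c^{k/2-1}$, the identity $(bc)^{k/2-1}\mid b^{k/2}c^{k/2-1}\mid n$ forces $bc\le D$, so
\[
d(n;k/2,k/2-1)\le\#\{(b,c)\in\mathbb{Z}_{>0}^{2}:bc\le D\}=\sum_{m\le D}\tau(m)=O(D\log D).
\]
Restricting the sum to $G$ therefore gives $S(x,h,k,C)\ge|G|\cdot\exp(-\log 2\cdot O(D\log D))$.

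To lower-bound $|G|$, I would estimate $|G^c|$ by a union bound over the potential witnesses $e$:
\[
|G^c|\le\sum_{e>D,\,e^{k/2-1}\le x+Ch}\left\lceil\frac{Ch}{e^{k/2-1}}\right\rceil.
\]
Splitting at $e^{k/2-1}=Ch$, the small-$e$ contribution is $O(Ch/D^{k/2-2})$ via the convergent tail $\sum_{e>D}e^{-(k/2-1)}$ (using $k\ge 6$, so $k/2-1\ge 2$). The large-$e$ contribution must be organized according to the quotient $q=n/e^{k/2-1}$: for each $q\ge1$, the admissible $e$'s lie in an interval of length $\asymp q^{-1/(k/2-1)}Ch\cdot x^{1/(k/2-1)-1}$, and summing carefully over $q$, combined with the growth assumption $h=\Omega((\log x)^{1/(1-\log 2)})$, absorbs the rounding error into a bound of the same order $O(Ch/D^{k/2-2})$. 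Thus for a suitable $D$, $|G^c|\le Ch/2$ and consequently $|G|\ge Ch/2$ for all sufficiently large $x$.

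It then remains to choose $D$ so that $O(D\log D)$ matches the target exponent $T:=\exp(4\log 2\cdot\log x/((k-2)\log h))$. The hypothesis $(k-2)\log h\log\log h\ge 4\log 2\log x$ gives $T\le\log h$, so $D$ is at most polylogarithmic in $h$, and the earlier tail estimates are trivially satisfied. The main obstacle is the careful short-interval count for the large-$e$ contribution to $|G^c|$: a crude union bound would produce a spurious error $O(x^{1/(k/2-1)})$ that overwhelms $Ch$ when $k=6$ and $h=o(\sqrt{x})$, so the refined $q$-by-$q$ counting sketched above is where the essential technical work lies.
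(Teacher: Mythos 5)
Your bound $d(n;k/2,k/2-1)\le\#\{(b,c):bc\le D\}=O(D\log D)$ on the good set is correct, but the decomposition itself creates an obstacle that cannot be overcome by the counting you sketch. Bounding $|G^{c}|$ requires counting the $n\in[x,x+Ch)$ divisible by $e^{k/2-1}$ for some $e>D$, and in the range $(Ch)^{2/(k-2)}<e\le(x+Ch)^{2/(k-2)}$ each modulus contributes at most one multiple while there are about $x^{2/(k-2)}$ candidate values of $e$. Your $q$-by-$q$ reorganization does not repair this: for each fixed $q$ the interval of admissible $e$ has length $O\big(q^{-2/(k-2)}Ch\,x^{2/(k-2)-1}\big)$, which is $o(1)$ uniformly (since $h=o(\sqrt{x})$ and $k\ge6$), so the total length summed over $q$ is a negligible $O((Ch)^{2/(k-2)})$ and the count is dominated entirely by the number of \emph{nonempty} intervals among the up to $x/(Ch)\gg Ch$ values of $q$. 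Showing that almost all of these intervals miss the integers is precisely the problem of gaps between $(k/2-1)$-free numbers in short intervals; for $k=6$ it is the squarefree-gap problem of Filaseta and Trifonov cited in the introduction, whose best unconditional resolution needs interval length $\gg x^{1/5}\log x$, whereas here $h$ may be as small as a power of $\log x$. So the step $|G^{c}|\le Ch/2$ is not an elementary estimate but an open problem in the relevant range, and the whole point of the paper's construction is to avoid it.

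The paper chooses the bad set in the opposite way: $A$ is the set of $n\in[x,x+Ch)$ divisible by $p^{k/2-1}$ for some \emph{prime} $p\le h$. The union bound $\sum_{p\le h}\big(Ch/p^{k/2-1}+1\big)\le(\zeta(k/2-1)-1)Ch+o(h)$ is affordable because only $\pi(h)=o(h)$ moduli occur, so the rounding errors total $o(h)$ and $|A|\le(1-B)Ch$. On the complement one does not get a bounded divisor count; instead, every prime whose $(k/2-1)$-th power divides $n$ exceeds $h$, so the total exponent of such primes is at most $\log n/\log h$, which yields $d(n;k/2,k/2-1)\le\exp\big(\log2\cdot\log n\big(\tfrac{1}{(k/2-1)\log h}+\tfrac{1}{(k/2)\log h}\big)\big)$. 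This can be as large as $\log h$, but that is all the final argument requires. The lesson is to tolerate large $(k/2-1)$-th power divisors supported on primes larger than $h$ (they are multiplicatively rare, hence contribute little to $d$) rather than to try to exclude them from a short interval, which is intractable.
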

\begin{proof}
Fix an $x>0$ and write $k=k(x),h=h(x)$. Denote by $A$ the subset
of $[x,x+Ch)$ consisting of all $n$ divisible by some $p^{k/2-1}$,
where $p\leq h$. We can bound the size of $A$ by
\begin{eqnarray*}
|A| & \leq & \sum_{\mbox{prime }p\leq h}\Big(\frac{Ch}{p^{k/2-1}}+1\Big)\\
 & \leq & (\zeta(k/2-1)-1)Ch+o(h),
\end{eqnarray*}
where $\zeta$ is the Riemann zeta function and we used the elementary
Chebyshev bound $\pi(h)=o(h)$ on the prime-counting function $\pi$.
Since $k\geq6$ and $\zeta(t)-1<1$ uniformly on $t\geq2$, there
exists a constant $B$ such that for $x$, and thus $h$, sufficiently
large, $|A|\leq(1-B)Ch$.

If $n\not\in A$, we can factor $n=p_{1}^{\alpha_{1}}\cdots p_{r}^{\alpha_{r}}n'$
where $n'$ is $(k/2-1)$-th power free, each $\alpha_{i}\geq k/2-1$,
and each $p_{i}\geq h$ is prime. As a result,
\[
\sum_{i}\alpha_{i}\leq\frac{\log n}{\log h},
\]
so by a smoothing argument we can bound $d(n;\frac{k}{2},\frac{k}{2}-1)$
subject to these assumptions,

\[
d\Big(n;\frac{k}{2},\frac{k}{2}-1\Big)\leq\exp\Big(\log2\cdot\frac{\log n}{(k/2-1)\log h}+\log2\cdot\frac{\log n}{(k/2)\log h}\Big),
\]
where we simply bounded the number of pairs $b,c$ satisfying $b^{k/2-1}|n$
and $c^{k/2}|n$. Summing up over all terms in $[x,x+Ch)$ outside
$A$, we get
\[
S(x,h,k,C)\geq BCh\exp\Big(-\log2\exp\Big(\Big(\frac{1}{k}+\frac{1}{k-2}\Big)\frac{(2\log2)\cdot\log x}{\log h}\Big)\Big),
\]
and finally replacing $1/k\le1/(k-2)$ we have the desired inequality.
\end{proof}
Finally, we prove Theorem \ref{thm:reduced} using Lemma \ref{lem:sum-bound}.
\begin{proof}
(of Theorem \ref{thm:reduced}). By Lemma \ref{lem:singlegap} it
suffices to pick $h,k$ such that the sum of probabilities
\[
\sum_{x\ge1}\mathbb{P}[T_{k}\cap[x,x+Ch(x))=\emptyset]\le\sum_{x\geq1}\exp(-S(x,h,k,C))<1
\]
for $C$ sufficiently large, forcing the probability of finding a
$T$ with gaps $O(h)$ to be nonzero. This will hold as long as the
sum converges for some fixed $C$; making $C$ large enough will make
the sum arbitrarily small. Now, suppose that $(k-2)\log h\log\log h\geq4\log2\cdot\log n$
as in Theorem \ref{thm:reduced}. Then, applying the inequality of
Lemma \ref{lem:sum-bound}, we have
\begin{eqnarray*}
S(x,h,k,C) & \geq & BCh\exp(-\log2\log h)\\
 & \geq & BCh^{1-\log2},
\end{eqnarray*}
and finally since $h=\Omega((\log x)^{1/(1-\log2)})$, we get
\[
\sum_{x\geq1}\exp(-S(x,h,k,C))\le\sum_{x\ge1}x^{-BCD},
\]
for some constant $D>0$, so picking $C$ for which $BC>1$ gives
a convergent sum.
\end{proof}

\section{Closing Remarks}

The goal of this paper was to interpolate smoothly between the two
feasible pairs $(h,k)=(\exp(C\log N/\log\log N),6)$ and $(h,k)=(1,\log N/\log2)$,
and we recover both pairs, up to constants, in the relation
\[
(k(n)-3)\log h(n)\log\log h(n)\geq4\log2\cdot\log n.
\]

Unfortunely, when $k$ is sufficiently close to $\log n$, then the
method of Theorem \ref{thm:main} fails because $h=o((\log x)^{1/(1-\log2)})$.
Nevertheless, we expect all pairs $(h,k)$ which satisfy this inequality
to be feasible. In the case that $h=1$ we can make an improvement
on $(1,\log N/\log2)$.
\begin{prop}
For any $\varepsilon>0$, if $k(n)=\varepsilon\log n$ then there
exists a $k$-GP-free sequence $T$ with gaps of size $O(1)$.\end{prop}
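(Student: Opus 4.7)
The plan is to give a completely explicit deterministic construction that exploits the fact that the hypothesis $k(n)=\varepsilon\log n$ severely restricts the common ratios of the forbidden progressions. The key observation is that any geometric progression of length $L \geq k(N)$ contained in $[1,N]$ must have common ratio $r = p/q$ in lowest terms with the numerator $p$ bounded by a constant depending only on $\varepsilon$: since the largest term is at least $p^{L-1}$ and at most $N$, while $\log N \leq L/\varepsilon$, we obtain $\log p \leq L/(\varepsilon(L-1)) \leq 2/\varepsilon$ whenever $L \geq 2$, so $p \leq P := \lceil e^{2/\varepsilon}\rceil$.

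Given this, I propose to take
\[
T \;=\; \{\, n \in \mathbb{N} : v_\ell(n) \leq 1 \text{ for every prime } \ell \leq P \,\},
\]
where $v_\ell$ denotes the $\ell$-adic valuation; equivalently, $T$ is the set of positive integers not divisible by the square of any prime $\ell \leq P$.

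To verify $T$ has $O(1)$ gaps, I would observe that in any interval $[m,m+C)$ the count of $n$ divisible by $\ell^2$ for some prime $\ell \leq P$ is at most $\sum_{\ell\leq P}(C/\ell^2 + 1) \leq \alpha C + \pi(P)$, where $\alpha := \sum_{p\text{ prime}} p^{-2} < 1$. For $C$ any constant exceeding $\pi(P)/(1-\alpha)$, which depends only on $\varepsilon$, this count is strictly less than $C$, so $T$ meets every interval of that length.

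To verify $T$ is $k$-GP-free, I would argue by contradiction. Suppose $T \cap [1,N]$ contains a GP of length $L \geq k(N) \geq 3$. The key observation forces its ratio's numerator to satisfy $p \leq P$, so any prime $\ell$ dividing $p$ lies in $[2,P]$. Writing the terms as $aq^{L-1}, aq^{L-2}p,\ldots,ap^{L-1}$ for some positive integer $a$, the largest term $ap^{L-1}$ has $v_\ell$-valuation at least $L-1 \geq 2$, contradicting its membership in $T$. No step is really difficult---the entire content lies in the initial numerator bound, and the small-$N$ boundary case (where $\varepsilon\log N$ might fall below $3$ and $k(N)$ defaults to $3$) is handled automatically, since the same bound $p \leq P$ applies to any length-$\geq 2$ GP with largest term in that range, so $T$ avoids those short progressions as well.
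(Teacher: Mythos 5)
Your proposal is correct, and it takes a recognizably similar but not identical route to the paper's. Both arguments hinge on the same key point -- that $k(n)=\varepsilon\log n$ forces every prime involved in a forbidden progression to be at most $e^{O(1/\varepsilon)}$ -- but the sets constructed differ. The paper takes $T$ to be the integers $m$ not divisible by $p^{\lceil k(m)\rceil}$ for any prime $p$ (a \emph{growing} power of a bounded prime), and bounds the gaps by noting that for large $x$ each modulus $p^{\lceil k(x)\rceil}$ contributes at most one multiple to a bounded interval; its $T$ consequently has density tending to $1$. You instead forbid a \emph{fixed} power, the square, of each prime $\ell\le P=\lceil e^{2/\varepsilon}\rceil$, and bound the gaps by the density count $\alpha=\sum_{\ell}\ell^{-2}<1$ plus the $+\pi(P)$ boundary terms. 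This costs you a positive proportion of the integers, but it buys a gap bound valid for every interval rather than only for $x$ sufficiently large, and -- more importantly -- your write-up actually carries out the verification that $T$ is $k$-GP-free, via $v_\ell\bigl(ap^{L-1}\bigr)\ge L-1\ge2$ for a prime $\ell\mid p$, a step the paper leaves implicit (and which, with the paper's growing exponent $\lceil k(m)\rceil$, requires a little more care than with your exponent $2$). Your treatment of the boundary case $\varepsilon\log N<3$ is also sound, since $\log N<L/\varepsilon$ still follows from $L\ge3>\varepsilon\log N$ there.
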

\begin{proof}
We say a positive integer $m$ is divisible by a $k$-th power if
$p^{\lceil k(m)\rceil}|m$ for some prime $p$, and that $m$ is $k$-free
otherwise. Consider the sequence $T$ of all $k$-free integers; we
claim that its gaps are uniformly bounded. In fact, note that if $p^{\lceil k(m)\rceil}|m$
then
\begin{eqnarray*}
p^{k(m)} & \le & m\\
\varepsilon\log m\cdot\log p & \le & \log m\\
\log p & \le & \frac{1}{\varepsilon},
\end{eqnarray*}
and so $p$ lies in the finite set of all primes less than $e^{1/\varepsilon}$.
In particular, for $x$ sufficiently large, the interval $[x,x+e^{1/\varepsilon}+1)$
will contain at least one $k$-free number. Indeed, it is easy to
check that each $p\le e^{1/\varepsilon}$ contributes at most one
multiple of $p^{k(x)}$ to that interval.
\end{proof}
Further improvement in the case of $h$ small or constant along these
lines is blocked by the Chinese Remainder Theorem. In particular,
for $k=o(\log n)$ and any constant $h$ we can find infinitely many
intervals $[x,x+h)$ in which each positive integer in $[x,x+h)$
is divisible by arbitrarily many $k(x)$-th powers of primes.

The probabilistic method in Definition \ref{process} is by no means
optimal, but is defined in such a way to guarantee the independence
of events in an interval $[n,n+Ch)$. We expect that a sophisticated
study of redundancies in our method can substantially improve at least
the constant in Theorem \ref{thm:main}.

\section{Acknowledgements}

I would like to thank Levent Alpoge and Joe Gallian for correcting
many mistakes.

\end{document}